\newtheorem{theorem}{Theorem}[section]
\newtheorem{lemma}[theorem]{Lemma}
\newtheorem{e-proposition}[theorem]{Proposition}
\newtheorem{corollary}[theorem]{Corollary}
\newtheorem{e-definition}[theorem]{Definition\rm}
\newtheorem{remark}{\it Remark\/}
\newtheorem{theoreme}{Th\'eor\`eme}[section]
\newtheorem{corollaire}[theoreme]{Corollaire}
\newcommand{\bb}{\mathbb}
\newcommand{\blob}{\bullet}
\newcommand{\comment}[1]{}
\newcommand{\into}{\hookrightarrow}
\newcommand{\isoto}{\stackrel{\simeq}{\to}}
\newcommand{\op}{\operatorname}
\newcommand{\sub}[1]{{\mbox{\scriptsize #1}}}
\newcommand{\To}{\longrightarrow}
\newcommand{\xto}{\xrightarrow}
\renewcommand{\hat}{\widehat}
\renewcommand{\ker}{\operatorname{Ker}}
\renewcommand{\projlim}{\varprojlim}
\begin{document}

\selectlanguage{english}
\title{A case of the deformational Hodge conjecture via a pro Hochschild--Kostant--Rosenberg theorem}

\selectlanguage{english}
\author{Matthew Morrow}
\date{\normalsize Appears in {\em Comptes Rendus Mathématique}, vol.~352, issue 03, 173--177, 2014.}


\maketitle
\noindent\hrulefill
\begin{abstract}
\selectlanguage{english}
Following ideas of Bloch, Esnault, and Kerz, we establish the deformational part of Grothendieck's variational Hodge conjecture for proper, smooth schemes over $K[[t]]$, where $K$ is an algebraic extension of $\bb Q$. The main tool is a pro Hochschild--Kostant--Rosenberg theorem for Hochschild homology.

\vskip 0.5\baselineskip

\selectlanguage{francais}
\begin{center}{\bf R\'esum\'e}\end{center}
{\bf Un cas de la conjecture de Hodge déformationnelle via un théorème de Hochschild--Kostant--Rosenberg pro.}
En suivant des idées de Bloch, Esnault et Kerz, nous établissons la partie déformationnelle de la conjecture de Hodge variationnelle pour les schémas propres et lisses sur $K[[t]]$, où $K$ est une extension algébrique de $\bb Q$. L'outil principal est un théorème de Hochschild--Kostant--Rosenberg pro pour l'homologie de Hochschild.
\end{abstract}
\noindent\hrulefill

\selectlanguage{francais}
\section*{Version fran\c{c}aise abr\'eg\'ee}
\subsubsection*{Introduction et présentation du résultat principal}
S.~Bloch, H.~Esnault et M.~Kerz dans \cite{BlochEsnaultKerz2012} ont proposé de décomposer la conjecture de Hodge variationnelle $p$-adique de Fontaine--Messing en deux parties : premièrement une partie formelle déformationnelle et secondement une partie algébrisationnelle. Sous une hypothèse faible que la caractéristique est grande par rapport à la dimension, ils ont établi la partie déformationnelle. A.~Beilinson \cite{Beilinson2013} a donné récemment une nouvelle preuve. Le but de cette note est de démontrer une partie déformationnelle analogue pour la conjecture de Hodge variationnelle de Grothendieck pour les schémas propres et lisses sur $ K[[t]]$, où $K$ est une extension algébrique de $\bb Q $. De manière remarquable, nous la déduisons facilement d'un nouveau théorème de Hochschild--Kostant--Rosenberg (HKR) pro en homologies de Hochschild et cyclique.

Pour présenter le résultat principal, considérons $A=K[[t]]$, où $K$ est une extension algébrique de $\bb Q$, et soit $X$ un schéma propre et lisse sur $A$ ; notons $Y$ la fibre spéciale et $Y_r=Y\otimes_AA/t^rA$ son $r$-ième épaississement infinitésimal. En utilisant le théorème de HKR pro que nous allons décrire ci-dessous, nous construisons le diagramme commutatif (\ref{main_commutative_diagram}). Dans ce diagramme, $F^p_\sub{flat}H^{2p}_\sub{dR}(Y/K)$ désigne les classes de $H^{2p}_\sub{dR}(Y/K)$ dont les relèvements plats à $H^{2p}_\sub{dR}(X/A)$ appartiennent à $F^pH^{2p}_\sub{dR}(X/A)$, et $\Omega_{(Y_r,Y)/K}^\blob:=\ker(\Omega_{Y_r/K}^\blob\to \Omega_{Y/K}^\blob)$. En particulier, le carré gauche du diagramme (\ref{main_commutative_diagram}) est cocartesien, ce qui implique immédiatement le résultat suivant:

\begin{theoreme}\label{main_theorem_french}
Dans la situation ci-dessus,\comment{les conditions suivantes sont équivalentes pour tout $z\in K_0(Y)$:
\begin{enumerate}
\item on peut relever $z$ à $\projlim_rK_0(Y_r)$ ;
\item la classe de de Rham $ch(z)\in F^pH_\sub{dR}^{2p}(Y/K)$ appartient à $F^p_\sub{flat}H_\sub{dR}^{2p}(Y/K)$ pour tout entiers $p=0,\dots,\dim Y$.
\end{enumerate}
}
soit $z\in K_0(Y)$. Alors on peut relever $z$ à $\projlim_rK_0(Y_r)$ si et seulement si la classe de de Rham $ch(z)\in F^pH_\sub{dR}^{2p}(Y/K)$ appartient à $F^p_\sub{flat}H_\sub{dR}^{2p}(Y/K)$ pour tout entiers $p=0,\dots,\dim Y$.
\end{theoreme}

Le théorème est analogue au résultat $p$-adique de Bloch, Esnault et Kerz mentionné ci-dessus, et on peut l'appeler la partie déformationnelle de la conjecture de Hodge variationnelle pour $X$.

\subsubsection*{Le théorème de Hochschild--Kostant--Rosenberg pro et la démonstration du résultat principal}\label{section_HKR_french}
Expliquons maintenant  le théorème de HKR pro que nous utilisons pour démontrer le théorème principal. Pour simplifier, considérons un corps $k$ de caractéristique nulle, même si les résultats sont vrais en plus grande généralité. Pour un $k$-schéma $X$, pas nécessairement de type fini, on note $H_\sub{dR}^n(X/k)$ l'hypercohomologie du complexe de de Rham $\Omega_{X/k}^\bullet$ (pas la cohomologie de de Rham algébrique de Hartshorne).

D'abord, rappelons le théorème de HKR classique de 1962 \cite[Thm.~3.4.4]{Loday1992}. Il dit que si $R$ est une $k$-algèbre lisse alors l'homologie de Hochschild de $R$ sur $k$ dégénère naturellement aux différentielles de K\"ahler via l'application antisymétrisation : $\Omega_{R/k}^n\isoto HH_n^k(R)$ pour tout $n\ge0$. Des conséquences formelles de ce résultat sont des descriptions similaires des homologies cyclique, cyclique négative et cyclique périodique en termes de complexes de de Rham (tronqués) : voir (\ref{consequences_of_HKR}). Grâce à la désingularisation de Neron--Popescu\comment{\cite{Popescu1985,Popescu1986}}, on a les isomorphismes (\ref{consequences_of_HKR}) même si $R$ est une $k$-algèbre régulière. Plus généralement, si $X$ est un schéma de dimension de Krull finie qui admet un revêtement fini par les spectres des anneaux réguliers, alors les techniques courantes de descente \cite{Weibel1997} donnent les isomorphismes~(\ref{global_consequences_of_HKR}).

Tournons-nous maintenant vers le théorème de HKR pro. Plusieurs versions différentes de ce résultat sont apparues récemment, par exemple \cite[Thm.~3.2]{Cortinas2009} ou \cite{Krishna2010}, mais pour cette note la version forte suivante, qui est une conséquence de notre travail \cite{Morrow_pro_H_unitality}, est nécessaire :

\begin{theoreme}[{Théorème de HKR Pro \cite[Thm.~0.8]{Morrow_pro_H_unitality}}]\label{Pro_HKR_french}
Si $R$ est une $k$-algèbre réguleière et $I$ un idéal de $R$, alors pour tout $n\ge 0$ le morphisme canonique $\{\Omega_{(R/I^r)/k}^n\}_r\To\{HH_n^k(R/I^r)\}_r$ de groupes abéliens pro est un isomorphisme.
\end{theoreme}

Comme le théorème de HKR classique, le théorème de HKR pro a des conséquences pour les homologies cyclique, cyclique négative et cyclique périodique, ainsi que pour les schémas ; en particulier, on obtient le corollaire suivant :

\begin{corollaire}\label{corollary_pro_HKR_french}
Soit $X$ un schéma de dimension de Krull finie qui admet un revêtement fini par les spectres des anneaux réguliers, et soit $Y\into X$ une immersion fermée. Alors pour tout $n\in \bb Z$ il existe un isomorphisme naturel $\{HN_n^k(Y_r)\}_r\cong\bigoplus_{p\in\bb Z}\{\bb H^{2p-n}(X,\Omega_{Y_r/k}^{\ge p})\}_r$ de groupes abéliens pro, où $Y_r$ désigne le $r$-ième épaississement infinitésimal de $Y$ dans $X$.
\end{corollaire}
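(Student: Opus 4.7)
La stratégie consiste à relever le Théorème \ref{Pro_HKR_french} au niveau des complexes mixtes, puis d'en déduire formellement l'énoncé correspondant pour $HN$ par la même méthode qui mène aux conséquences (\ref{consequences_of_HKR}) du HKR classique, et enfin de globaliser par descente. Plus précisément, pour une $k$-algèbre régulière $R$ et un idéal $I\sseq R$, l'application d'antisymétrisation fournit un morphisme canonique de pro-complexes mixtes
\[
\{(\Omega_{(R/I^r)/k}^\blob,\,0,\,d_\sub{dR})\}_r \To \{(C_\blob^k(R/I^r),\,b,\,B)\}_r,
\]
où $B$ est l'opérateur de Connes. Le Théorème \ref{Pro_HKR_french} dit précisément qu'il s'agit d'un pro-quasi-isomorphisme de complexes mixtes. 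Comme en caractéristique zéro la décomposition $HN_n^k\cong\bigoplus_p\bb H^{2p-n}(\Omega^{\ge p})$ pour les algèbres régulières ne dépend que de cette structure de complexe mixte, on en déduit aussitôt le cas affine :
\[
\{HN_n^k(R/I^r)\}_r \cong \bigoplus_{p\in\bb Z}\{\bb H^{2p-n}(\Omega_{(R/I^r)/k}^{\ge p})\}_r.
\]

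Pour globaliser au schéma $X$, j'invoquerai la descente standard de \cite{Weibel1997} pour l'homologie cyclique négative, déjà utilisée dans le cas classique pour obtenir (\ref{global_consequences_of_HKR}). Le revêtement affine de $X$ étant fini, la suite spectrale de \v{C}ech associée n'implique qu'un nombre fini de termes, et elle passe donc sans difficulté aux pro-systèmes ; il en va de même pour l'hypercohomologie $\bb H^*(X,\Omega_{Y_r/k}^{\ge p})$. Les deux suites spectrales ayant des pages $E_2$ identifiées par le cas affine, leurs aboutissements s'identifient, d'où l'isomorphisme cherché ; la naturalité est claire par construction.

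L'obstacle principal se situe à la première étape : il faut savoir que le pro-HKR vaut non seulement au niveau des groupes d'homologie de Hochschild, mais au niveau des complexes mixtes eux-mêmes, faute de quoi il n'y aurait aucune raison formelle de transférer le pro-isomorphisme à $HN$. C'est exactement la force de la version du théorème de HKR pro extraite de \cite{Morrow_pro_H_unitality}, par contraste avec les versions plus faibles de \cite{Cortinas2009, Krishna2010}. Une fois ce point acquis, le reste du raisonnement est une conséquence formelle de la machinerie standard des complexes mixtes en caractéristique nulle.
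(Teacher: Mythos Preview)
Your argument is correct and matches the paper's own (one-sentence) justification: the pro HKR theorem has ``exactly the same formal consequences for $HC$, $HN$, $HP$, and for the global setting as the classical HKR theorem,'' and your sketch simply unpacks this via the antisymmetrization map of mixed complexes and then globalises by the \v{C}ech/descent argument of \cite{Weibel1997}.

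One small correction to your closing commentary, though it does not affect the proof. You write that the crucial strength of the version of pro HKR in \cite{Morrow_pro_H_unitality}, as opposed to \cite{Cortinas2009, Krishna2010}, is that it holds ``au niveau des complexes mixtes eux-m\^emes''. That is not quite the distinction. For \emph{any} commutative $k$-algebra $S$ in characteristic zero, the antisymmetrization map $(\Omega_{S/k}^\blob,0,d_\sub{dR})\to(C_\blob^k(S),b,B)$ is already a morphism of mixed complexes; hence a pro isomorphism on $HH$ induced by this map is automatically a pro quasi-isomorphism of mixed complexes, and the passage to $HN$ is then formal regardless of which pro HKR theorem one cites. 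What actually distinguishes Theorem~\ref{Pro_HKR_french} from the earlier versions is the generality of its hypotheses (regular $R$, not merely essentially smooth of finite type; arbitrary ideal $I$), which is what the application to $X$ over $K[[t]]$ requires. Your first two paragraphs use the theorem correctly; only the diagnosis in the third is slightly off.
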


Finalement, expliquons brièvement la construction du diagramme commutatif (\ref{main_commutative_diagram}), dont le théorème principal décole. Soient $K, A, X, Y, Y_r$ comme dans l'introduction. Grâce à la théorie du caractère de Chern de la $K$-théorie à l'homologie cyclique négatif, on a un diagramme commutatif (\ref{chern_character}) pour tout $n\ge0$, et le théorème de HKR classique et le corollaire \ref{corollary_pro_HKR_french} (avec $k=\bb Q$) impliquent que le ligne du bas de ce diagramme est égale à
\[\textstyle\bigoplus_p\{\bb H^{2p-n}(Y_r,\Omega_{Y_r/K}^{\ge p})\}_r\stackrel{i}{\To}\bigoplus_pF^pH^{2p-n}_\sub{dR}(Y/K)\To\bigoplus_p\{\bb H^{2p-(n-1)}(Y_r,\Omega_{(Y_r,Y)/K}^{\ge p})\}_r\]
Or, en utilisant la théorie de la connexion de Gauss--Manin (voir (\ref{GM_commutative_diagram}) et le lemme \ref{lemma_flat_filtration}), on voit que l'image de $i$ est $\bigoplus_pF^p_\sub{flat}H_\sub{dR}^{2p-n}(Y/K)$. Le cas $n=0$ prouve l'existence du diagramme (\ref{main_commutative_diagram}).

\selectlanguage{english}
\section{Introduction and statement of the main result}
In \cite{BlochEsnaultKerz2012}, S.~Bloch, H.~Esnault, and M.~Kerz proposed decomposing Fontaine--Messing's $p$-adic variational Hodge conjecture into two parts: firstly a formal deformational part and secondly an algebrizational part. Under a mild assumption that the characteristic is large compared to the dimension, they proved the deformational part. A new proof has very recently been given by A.~Beilinson \cite{Beilinson2013}. The aim of this note is to prove an analogous deformational part of Grothendieck's variational Hodge conjecture for proper, smooth schemes over $K[[t]]$, where $K$ is an algebraic extension of $\bb Q$. Remarkably, this will be quickly deduced from a recent pro Hochschild--Kostant--Rosenberg (HKR) theorem in Hochschild and cyclic homology.

To precisely state the main result, let $A=K[[t]]$, where $K$ is an algebraic extension of $\bb Q$, and let $X$ be a proper, smooth scheme over $A$; let $Y$ denote the special fibre, and write $Y_r=Y\otimes_AA/t^rA$ for its $r^\sub{th}$ infinitesimal thickening. Using the pro HKR theorem which will be described in section \ref{section_HKR}, we will construct a natural commutative diagram with exact rows:
\begin{equation}
\xymatrix@C=1cm@R=7mm{
\projlim_rK_0(Y_r)\ar[r]\ar[d]& K_0(Y)\ar[d]^{ch}\ar[r]& \projlim_rK_{-1}(Y_r,Y)\ar[d]^\cong\\
\bigoplus_pF^p_\sub{flat}H^{2p}_\sub{dR}(Y/K)\ar@{^(->}[r]& \bigoplus_pF^pH^{2p}_\sub{dR}(Y/K)\ar[r]& \bigoplus_p\projlim_r\bb H^{2p+1}(Y_r,\Omega_{(Y_r,Y)/K}^{\ge p})
}\label{main_commutative_diagram}
\end{equation}
Here $F^p_\sub{flat}H^{2p}_\sub{dR}(Y/K)$ denotes the classes of $H^{2p}_\sub{dR}(Y/K)$ whose flat lift to $H^{2p}_\sub{dR}(X/A)$ belongs to $F^pH^{2p}_\sub{dR}(X/A)$ (see section \ref{section_main} for details), and $\Omega_{(Y_r,Y)/K}^\blob:=\ker(\Omega_{Y_r/K}^\blob\to \Omega_{Y/K}^\blob)$. In particular the left square is cocartesian, which immediately yields the following:

\begin{theorem}\label{main_theorem}
In the situation above, the following are equivalent for any $z\in K_0(Y)$:
\begin{enumerate}
\item $z$ lifts to $\projlim_rK_0(Y_r)$.
\item The de Rham class $ch(z)\in F^pH_\sub{dR}^{2p}(Y/K)$ belongs to $F^p_\sub{flat}H_\sub{dR}^{2p}(Y/K)$ for $p=0,\dots,\dim Y$.
\end{enumerate}
\end{theorem}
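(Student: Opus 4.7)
Once the diagram (\ref{main_commutative_diagram}) has been constructed, with exact rows and an isomorphism on the right, the theorem reduces to a short diagram chase. For (i)$\Rightarrow$(ii), a lift $\tilde z\in\projlim_rK_0(Y_r)$ of $z$ maps via the left vertical arrow into $\bigoplus_pF^p_\sub{flat}H_\sub{dR}^{2p}(Y/K)$, and commutativity of the left square identifies the image of $\tilde z$ under the horizontal injection with $ch(z)$; hence $ch(z)\in\bigoplus_pF^p_\sub{flat}H_\sub{dR}^{2p}(Y/K)$. For (ii)$\Rightarrow$(i), if $ch(z)$ lies in this subgroup, exactness of the bottom row sends $ch(z)$ to zero in $\bigoplus_p\projlim_r\bb H^{2p+1}(Y_r,\Omega_{(Y_r,Y)/K}^{\ge p})$; commutativity of the right square together with the isomorphism on the right forces $z$ to vanish in $\projlim_rK_{-1}(Y_r,Y)$, and exactness of the top row then produces the desired lift of $z$ to $\projlim_rK_0(Y_r)$.

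The whole content thus rests on constructing and justifying diagram (\ref{main_commutative_diagram}). The top row arises by applying $\projlim_r$ to the long exact sequence of relative $K$-theory, which remains exact provided a Mittag--Leffler condition on $\{K_i(Y_r,Y)\}_r$ is verified. The vertical arrows come from the Chern character $K_n\to HN_n^{\bb Q}$ followed by the HKR identifications: classical HKR for the middle column (where $Y$ is a smooth $K$-scheme) and the pro HKR corollary \ref{corollary_pro_HKR_french} for the outer columns (for the thickenings $Y_r\into X$). The bottom row is then obtained from the short exact sequence $0\to\Omega_{(Y_r,Y)/K}^{\ge p}\to\Omega_{Y_r/K}^{\ge p}\to\Omega_{Y/K}^{\ge p}\to 0$ of truncated de Rham complexes, with the right vertical becoming an isomorphism because both columns realize the same connecting homomorphism attached to this sequence.

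The delicate point, which I expect to be the main obstacle, is the identification of the image of the middle Chern character with $\bigoplus_pF^p_\sub{flat}H_\sub{dR}^{2p}(Y/K)$ rather than the a priori larger $\bigoplus_pF^pH_\sub{dR}^{2p}(Y/K)$. This is where the Gauss--Manin connection enters: one must show that a class in $H^{2p}_\sub{dR}(Y/K)$ lifts to $\projlim_r\bb H^{2p}(Y_r,\Omega_{Y_r/K}^{\ge p})$ if and only if its flat lift to $H^{2p}_\sub{dR}(X/A)$ lies in $F^pH^{2p}_\sub{dR}(X/A)$; that is, pro-formal Hodge lifting on the infinitesimal thickenings must be shown to match flat Hodge lifting across the generic fibre. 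Once this comparison is established, the diagram is complete and the theorem follows by the chase above.
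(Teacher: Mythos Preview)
Your outline matches the paper's approach closely: construct diagram~(\ref{main_commutative_diagram}) via the Chern character to negative cyclic homology, invoke classical HKR for $Y$ and the pro HKR corollary for the $Y_r$, identify the image of the bottom-left map with the flat Hodge filtration via the Gauss--Manin lemma, and then read off the theorem by the diagram chase you describe. The chase itself and the identification of the ``delicate point'' (Lemma~\ref{lemma_flat_filtration} in the paper) are exactly right.

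There is, however, one genuine gap. You write that the right vertical arrow is an isomorphism ``because both columns realize the same connecting homomorphism attached to this sequence.'' That is not the reason, and no such formal argument is available: the left vertical arrow is not known to be an isomorphism, so a five-lemma or connecting-map compatibility cannot produce the right isomorphism. The isomorphism $\projlim_rK_{-1}(Y_r,Y)\cong\projlim_rHN_{-1}^{\bb Q}(Y_r,Y)$ is precisely Goodwillie's theorem on relative $K$-theory and negative cyclic homology for nilpotent ideal extensions in characteristic zero \cite{Goodwillie1986}. This is the single deep external input that makes the whole machine run, and it needs to be named explicitly. (A minor related point: it is not the image of the \emph{middle} Chern character that must be identified with $\bigoplus_pF^p_\sub{flat}$, but the image of the bottom-left horizontal map $i$; the middle Chern character lands in all of $\bigoplus_pF^pH^{2p}_\sub{dR}(Y/K)$, as your diagram chase in fact uses.)
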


The theorem may be called the deformation part of Grothendieck's variational Hodge conjecture for $X$; it is precisely analogous to the aforementioned $p$-adic result of Bloch, Esnault, and Kerz.

\section{The pro Hochschild--Kostant--Rosenberg theorem}\label{section_HKR}
Here we explain the pro HKR theorem which will be used to deduce the main theorem. For simplicity we fix a characteristic zero field $k$, although the results hold more generally; to prove the main theorem we will take $k=\bb Q$. Given a $k$-scheme $X$, not necessarily of finite type, we denote by $H_\sub{dR}^n(X/k)$ the hypercohomology of the de Rham complex $\Omega_{X/k}^\bullet$ (not Hartshorne's algebraic de Rham cohomology).

We first recall the classical 1962 HKR theorem \cite[Thm.~3.4.4]{Loday1992}. It states that if $R$ is a smooth $k$-algebra, then the Hochschild homology of $R$ over $k$ naturally degenerates to K\"ahler differentials via the antisymmetrization map: $\Omega_{R/k}^n\isoto HH_n^k(R)$ for all $n\ge0$. Formal consequences of this are similar descriptions of the cyclic, negative cyclic, and periodic cyclic homologies in terms of (truncated) de Rham complexes:
\begin{equation}\textstyle
HC_n^k(R)\cong\bigoplus_{p=0}^nH^{2p-n}(\Omega_{R/k}^{\le p})\comment{=\Omega_{R/k}^n/d\Omega_{R/k}^{n-1}\oplus\bigoplus_{p=0}^{n-1}H_\sub{dR}^{2p-n}(R/k)\\}\quad
HN_n^k(R)\cong\bigoplus_{p\in\bb Z}H^{2p-n}(\Omega_{R/k}^{\ge p})\quad
HP_n^k(R)\cong\bigoplus_{p\in\bb Z}H^{2p-n}_\sub{dR}(R/k).
\label{consequences_of_HKR}
\end{equation}
Using Neron--Popescu desingularisation\comment{\cite{Popescu1985,Popescu1986}}, these isomorphisms remain valid whenever $R$ is a regular $k$-algebra. More generally, if $X$ is a finite Krull dimensional $k$-scheme which has a finite cover by the spectra of regular rings, then these results globalise by the usual descent methods \cite{Weibel1997} to give \begin{equation}\textstyle HC_n^k(X)\cong\bigoplus_{p\in\bb Z}\bb H^{2p-n}(X,\Omega_{X/k}^{\le p})\quad HN_n^k(X)\cong\bigoplus_{p\in\bb Z}\bb H^{2p-n}(X,\Omega_{X/k}^{\ge p})\quad
HP_n^k(X)\cong\bigoplus_{p\in\bb Z}H^{2p-n}_\sub{dR}(X/k).\label{global_consequences_of_HKR}
\end{equation}

Next we turn to the pro HKR theorem. Various versions of this result have recently appeared, e.g., \cite[Thm.~3.2]{Cortinas2009} \cite{Krishna2010}, but for this note the following strong version, which follows from the author's work \cite{Morrow_pro_H_unitality}, is required:

\begin{theorem}[{Pro HKR Theorem \cite[Thm.~0.8]{Morrow_pro_H_unitality}}]\label{Pro_HKR}
If $R$ is a regular $k$-algebra and $I$ is any ideal of $R$, then for all $n\ge0$ the canonical map of pro abelian groups $\{\Omega_{(R/I^r)/k}^n\}_r\To\{HH_n^k(R/I^r)\}_r$ is an isomorphism.
\end{theorem}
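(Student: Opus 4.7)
My approach is to bootstrap from the classical HKR theorem for the regular algebra $R$ by means of a pro-excision property. Classical HKR (via Neron--Popescu desingularisation) already identifies $HH_n^k(R)\cong \Omega_{R/k}^n$; the task is to propagate this isomorphism down to the non-regular quotients $R/I^r$. Since the quotients are not regular when $r\ge 2$, the only hope is to pass to the pro-system in $r$, where nilpotent pathology is controlled by Artin--Rees.

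The key technical step is to establish the \emph{pro H-unitality} of the pro-system of non-unital $k$-algebras $\{I^r\}_r$: the pro-bar complex $\{B_\bullet(I^r)\}_r$ is pro-acyclic. By the classical Suslin--Wodzicki theorem, H-unitality implies excision for Hochschild homology, and the pro version of this implication yields a long exact sequence of pro abelian groups
\[
\cdots\to\{HH_n^k(I^r)\}_r\to\{HH_n^k(R)\}_r\to\{HH_n^k(R/I^r)\}_r\to\{HH_{n-1}^k(I^r)\}_r\to\cdots
\]
where $HH^k(I^r)$ denotes Hochschild homology of the non-unital algebra $I^r$. I would prove pro H-unitality by an Artin--Rees-type argument: for each $r$ one constructs $s\gg r$ and a contracting homotopy showing that any bar cycle supported in $I^s$ becomes a boundary after pushing down to $I^r$, using commutativity of $R$ to shuffle factors and a Noetherian input to control how fast the filtration $I^r$ decays under multiplication.

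In parallel, the K\"ahler differentials side admits the analogous pro long exact sequence
\[
\cdots\to\{\Omega_{I^r/k}^n\}_r\to\{\Omega_{R/k}^n\}_r\to\{\Omega_{(R/I^r)/k}^n\}_r\to 0,
\]
obtained from the conormal sequence and a similar Artin--Rees identification of the kernel as a pro-system. The antisymmetrization map is natural, so combining it with the classical HKR isomorphism for $R$ produces a morphism of pro long exact sequences. A preliminary pro HKR statement for the non-unital algebras $\{I^r\}_r$, proved by the same H-unitality mechanism applied internally, identifies the leftmost terms; a pro 5-lemma then forces the middle map $\{\Omega_{(R/I^r)/k}^n\}_r\to\{HH_n^k(R/I^r)\}_r$ to be a pro-isomorphism.

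The main obstacle is unquestionably the pro H-unitality of $\{I^r\}_r$. Ordinary H-unitality of a single power $I$ fails in general, so the proof must genuinely exploit the decreasing filtration $I\supseteq I^2\supseteq\cdots$: one needs a contracting homotopy on the bar complex that maps $B_n(I^s)$ into $B_{n+1}(I^r)$ with explicit control $s=s(n,r)$, and verifying such estimates is where the Artin--Rees lemma and the commutative structure of $R$ enter in an essential way. Once this is in hand, everything else reduces to classical HKR and routine homological algebra in the pro category.
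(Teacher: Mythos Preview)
Your identification of pro H-unitality of $\{I^r\}_r$ as the core technical ingredient is exactly right, and that is indeed what the cited paper establishes. However, the present paper deduces the pro HKR theorem from it along a shorter path than your excision/5-lemma strategy. Rather than invoking the long exact sequence in $\{HH_*^k(I^r)\}_r$, it introduces Hochschild homology \emph{with coefficients} and shows (this is the cited Lemma~3.7, whose proof is where pro H-unitality enters) that the natural map
\[
\{HH_n^k(R,R/I^r)\}_r\longrightarrow\{HH_n^k(R/I^r)\}_r
\]
is a pro-isomorphism. The left side is computable levelwise: since $R$ is regular, classical HKR gives $HH_n^k(R,R/I^r)\cong\Omega_{R/k}^n\otimes_R R/I^r$, and the elementary inclusion $d(I^{2r})\subseteq I^r\Omega_{R/k}^1$ then identifies $\{\Omega_{R/k}^n\otimes_R R/I^r\}_r$ with $\{\Omega_{(R/I^r)/k}^n\}_r$. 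No 5-lemma is needed.

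Your route, by contrast, has a genuine gap at the step you call the ``preliminary pro HKR statement for the non-unital algebras $\{I^r\}_r$''. You need this isomorphism to feed the 5-lemma, but the phrase ``the same H-unitality mechanism applied internally'' does not explain how to get it: there is no smaller regular ring to excise from, so the argument does not recurse, and identifying $\{HH_n^k(I^r)\}_r$ directly is not visibly easier than the original problem. The K\"ahler side compounds the difficulty: the conormal sequence is only right exact, so there is no long exact sequence on the $\Omega$-side that matches the Hochschild excision sequence term by term; you are effectively \emph{defining} $\Omega_{I^r/k}^n$ as a kernel, at which point the required comparison with $HH_n^k(I^r)$ is essentially equivalent to what you are trying to prove. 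The coefficients trick sidesteps all of this by replacing the comparison of two long exact sequences with the single levelwise identity $HH_n^k(R,R/I^r)=\Omega_{R/k}^n\otimes_R R/I^r$.
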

\begin{proof}
For each $r\ge1$, there is a natural map $HH_n^k(R,R/I^r)\to HH_n^k(R/I^r)$ from the Hoschschild homology of $R$ with coefficients in $R/I^r$ to the Hochschild homology of $R/I^r$. Taking the limit over $r$, these assemble into a map of pro $R$-modules $\{HH_n^k(R,R/I^r)\}_r\to \{HH_n^k(R/I^r)\}_r$, which can be shown to be an isomorphism \cite[Lem.~3.7]{Morrow_pro_H_unitality}. Moreover, since $R$ is regular, the classical HKR theorem above implies that $HH_n^k(R,R/I^r)\cong \Omega_{R/k}^n\otimes_RR/I^r$. Finally, the isomorphism $\{\Omega_{R/k}^n\otimes_RR/I^r\}_r\cong\{\Omega_{(R/I^r)/k}^n\}$ is an easy consequence of the inclusion $d(I^{2r})\subseteq I^r\Omega_{R/k}^1$.
\end{proof}

In other words, even though the classical HKR theorem does not apply to the non-regular rings $R/I^r$, it applies in the limit over powers of $I$. This has exactly the same formal consequences for $HC$, $HN$, $HP$, and for the global setting as the classical HKR theorem. In particular, one obtains the following:

\begin{corollary}\label{corollary_pro_HKR}
Let $X$ be a finite Krull dimensional $k$-scheme which has a finite cover by the spectra of regular rings, and let $Y\into X$ be a closed subscheme. Then for each $n\in\bb Z$ there is a natural isomorphism of pro abelian groups, $\{HN_n^k(Y_r)\}_r\cong\bigoplus_{p\in\bb Z}\{\bb H^{2p-n}(X,\Omega_{Y_r/k}^{\ge p})\}_r$, where $Y_r$ denotes the $r^\sub{th}$ infinitesimal thickening of $Y$ inside $X$.
\end{corollary}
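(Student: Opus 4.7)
The plan is to reduce the corollary to an affine statement and then globalise via descent, in close parallel with the derivation of (\ref{global_consequences_of_HKR}) from the classical HKR theorem.

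First I would establish the affine version: if $R$ is a regular $k$-algebra and $I\subseteq R$ is an ideal, then $\{HN_n^k(R/I^r)\}_r\cong\bigoplus_p\{H^{2p-n}(\Omega_{(R/I^r)/k}^{\ge p})\}_r$ as pro abelian groups. To do this I would re-run the formal derivation by which the middle isomorphism of (\ref{consequences_of_HKR}) is obtained from the classical HKR theorem, but inside the abelian category of pro abelian groups. Concretely, the Connes bicomplex expresses $HN_n^k$ in terms of the Hochschild complex together with the $B$ operator, and in characteristic zero the associated spectral sequence degenerates to yield $\bigoplus_pH^{2p-n}(\Omega^{\ge p})$ once HKR is available. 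Theorem \ref{Pro_HKR} supplies precisely such a pro isomorphism $\{HH_n^k(R/I^r)\}_r\isoto\{\Omega_{(R/I^r)/k}^n\}_r$; the remaining point is compatibility of this isomorphism with $B$ on the left and the de Rham differential $d$ on the right. This should follow by inspection from the proof of Theorem \ref{Pro_HKR}: that isomorphism factors through $HH_n^k(R,R/I^r)\cong\Omega_{R/k}^n\otimes_RR/I^r$, and on the classical HKR side for the regular ring $R$ itself the Connes operator is known to coincide with $d$.

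Next I would globalise. By the descent results of \cite{Weibel1997}, negative cyclic homology satisfies Zariski descent on finite Krull dimensional schemes, so $HN_n^k(Y_r)$ may be computed from a \v{C}ech/Mayer--Vietoris spectral sequence associated to the given cover of $X$ by spectra of regular rings. This cover restricts to a finite affine cover of each infinitesimal thickening $Y_r$ (all supported on the same underlying space $Y$), and the affine case handled above identifies the sheafified $HN$ locally with the de Rham hypercohomology of $\Omega_{Y_r/k}^{\ge p}$. Running the descent spectral sequence in the abelian category of pro abelian groups, and using the finiteness both of the cover and of the Krull dimension, one sees that it is bounded uniformly in $r$; therefore the identification passes to the pro limit to yield the claimed global formula.

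The main obstacle I expect is the compatibility check in the affine step: one must verify that the pro HKR isomorphism intertwines $B$ and $d$ \emph{as a morphism of pro systems}, not merely levelwise at each $r$, and that the resulting pro spectral sequence converging from Hochschild to negative cyclic homology degenerates in the pro category. Once this coherence is in place, globalisation is essentially routine, with the finiteness hypotheses on $X$ ensuring that all spectral sequences are bounded and that no lim${}^1$ issues arise when passing between pro systems.
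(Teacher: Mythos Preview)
Your proposal is correct and follows exactly the route the paper indicates: the paper gives no detailed proof of this corollary, merely asserting that the pro HKR theorem ``has exactly the same formal consequences for $HC$, $HN$, $HP$, and for the global setting as the classical HKR theorem,'' and your affine-then-globalise outline is precisely what this means when unpacked. Your flagged obstacle (compatibility of $B$ with $d$ as a map of pro systems) is a fair technical point that the paper suppresses, but it is indeed routine once one observes that the pro HKR isomorphism factors through the classical one for $R$ itself.
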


\section{Deformational part of the variational Hodge conjecture: proof of Theorem \ref{main_theorem}}\label{section_main}
As in the introduction, let $K$ be a characteristic zero field, let $A=K[[t]]$, let $X$ be a proper, smooth scheme over $A$, let $Y$ denote the special fibre, and write $Y_r=Y\otimes_AA/t^rA$ for its $r^\sub{th}$ infinitesimal thickening. This notation is fixed for the remainder of the note. The following discussion and lemma concerning the Gauss--Manin connection and flat filtration are presumably well-known to experts and probably even essentially contained in Bloch's seminal paper on the subject \cite{Bloch1972}.

The short exact sequences of complexes of coherent sheaves \begin{equation}0\To \Omega^1_{A_r/K}\otimes_A\Omega^{\bullet-1}_{Y_r/A_r}\To\Omega^\bullet_{Y_r/K}\To\Omega^\bullet_{Y_r/A_r}\To 0\label{GM_complexes}\end{equation} give rise to long exact sequences of finite dimensional hypercohomology groups, so we may take $\projlim_r$ to obtain the formal Gauss--Manin exact sequence
\begin{equation}\cdots\To\projlim_rH_\sub{dR}^n(Y_r/K)\To H_\sub{dR}^n(X/A)\xto{\nabla}\hat\Omega^1_{A/K}\otimes_A H_\sub{dR}^n(X/A)\To\cdots.\label{GM}\end{equation} Here we have identified $H_\sub{dR}^n(X/A)$ with $\projlim_rH_\sub{dR}^n(Y_r/A_r)$ using Grothendieck's formal functions theorem \cite[Cor.~4.1.7]{EGA_III_I}. Moreover, $H_\sub{dR}^n(Y_r/K)=H_\sub{dR}^n(Y/K)$ for all $r$ by the Poincar\'e lemma \cite[Corol.~9.9.3]{Weibel1994}, and so (\ref{GM}) breaks into short exact sequences, identifying $H_\sub{dR}^n(Y/K)$ with the so-called {\em flat/horizontal} classes $H_\sub{dR}^n(X/A)^\nabla:=\ker\nabla$. Diagrammatically,
\begin{equation}
\xymatrix@C=1cm@R=0.7cm{
0\ar[r]&\projlim_rH_\sub{dR}^n(Y_r/K)\ar[r]\ar[dr]^\cong_\Phi& H_\sub{dR}^n(X/A)\ar[d]\ar[r]^{\nabla\qquad}&\hat\Omega^1_{A/K}\otimes_A H_\sub{dR}^n(X/A)\ar[r]&0\\
&&H_\sub{dR}^n(Y/K)&&
}\label{GM_commutative_diagram}
\end{equation}
Set $F^p_\sub{flat}H^n_\sub{dR}(Y/K):=\{x\in H_\sub{dR}^n(Y/K):\Phi^{-1}(x)\in F^pH^n_\sub{dR}(X/A)\}\subseteq F^pH^n_\sub{dR}(Y/K)$. We will need the following alternative description of this flat filtration:

\begin{lemma}\label{lemma_flat_filtration}
$\displaystyle F^p_\sub{flat}H^n_\sub{dR}(Y/K)=\op{Im}\Big(\projlim_r\bb H^n(Y_r,\Omega_{Y_r/K}^{\ge p})\To\bb H^n(Y,\Omega_{Y/K}^{\ge p})=F^pH^n_\sub{dR}(Y/K)\Big).$
\end{lemma}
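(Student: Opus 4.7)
The plan is to apply the stupid truncation $\sigma^{\ge p}$ to the Gauss--Manin short exact sequence (\ref{GM_complexes}), pass to $\projlim_r$ in hypercohomology to obtain a ``filtered Gauss--Manin'' long exact sequence, and then compare it with (\ref{GM}) by a diagram chase.

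Applying $\sigma^{\ge p}$ termwise to (\ref{GM_complexes}) produces a short exact sequence of complexes
\[0\To \Omega^1_{A_r/K}\otimes_{A_r}\Omega^{\ge p-1}_{Y_r/A_r}[-1]\To \Omega^{\ge p}_{Y_r/K}\To \Omega^{\ge p}_{Y_r/A_r}\To 0.\]
Taking hypercohomology, passing to $\projlim_r$, and identifying $\projlim_r\bb H^n(Y_r,\Omega^{\ge p}_{Y_r/A_r})$ with $\bb H^n(X,\Omega^{\ge p}_{X/A})$ via Grothendieck's formal functions theorem (Mittag--Leffler being satisfied since every term is an $A_r$-module of finite length over $A$) yields the long exact sequence
\[\cdots\To\hat\Omega^1_{A/K}\otimes_A\bb H^{n-1}(X,\Omega^{\ge p-1}_{X/A})\To\projlim_r\bb H^n(Y_r,\Omega^{\ge p}_{Y_r/K})\To\bb H^n(X,\Omega^{\ge p}_{X/A})\xto{\nabla_p}\hat\Omega^1_{A/K}\otimes_A\bb H^n(X,\Omega^{\ge p-1}_{X/A}).\]
The inclusion of the truncated sequence into (\ref{GM_complexes}) induces a natural map from this filtered long exact sequence into (\ref{GM}), with vertical arrows $i_q\colon\bb H^n(X,\Omega^{\ge q}_{X/A})\To H^n_\sub{dR}(X/A)$. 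Since $X/A$ is smooth and proper over the characteristic zero base $A=K[[t]]$, Hodge--to--de~Rham degenerates at $E_1$, whence each $i_q$ is injective with image $F^qH^n_\sub{dR}(X/A)$; as $\hat\Omega^1_{A/K}$ is a free $A$-module, $\mathrm{id}\otimes i_{p-1}$ is injective as well.

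Write $\iota$ for the inclusion $\projlim_r H^n_\sub{dR}(Y_r/K)\into H^n_\sub{dR}(X/A)$ of (\ref{GM}); by construction, $F^p_\sub{flat}H^n_\sub{dR}(Y/K)=\Phi(\iota^{-1}(F^pH^n_\sub{dR}(X/A)))$. For the inclusion $\subseteq$ of the lemma, any element of $\projlim_r\bb H^n(Y_r,\Omega^{\ge p}_{Y_r/K})$ maps by commutativity to an $x\in H^n_\sub{dR}(Y/K)$ whose flat lift $\iota(\Phi^{-1}(x))$ factors as $i_p(\xi)$, hence lies in $F^pH^n_\sub{dR}(X/A)$; so $x\in F^p_\sub{flat}$. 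For the reverse inclusion $\supseteq$, given $x\in F^p_\sub{flat}$ one writes $\iota(\Phi^{-1}(x))=i_p(\xi)$; compatibility of $\nabla$ with $\nabla_p$ then forces $(\mathrm{id}\otimes i_{p-1})(\nabla_p(\xi))=\nabla(i_p(\xi))=0$, whence $\nabla_p(\xi)=0$ by injectivity. By exactness of the filtered sequence $\xi$ lifts to some $\tilde\xi\in\projlim_r\bb H^n(Y_r,\Omega^{\ge p}_{Y_r/K})$, whose image in $H^n_\sub{dR}(Y/K)$ must equal $x$ (both have flat lift $i_p(\xi)$, and $\Phi^{-1}$ is injective).

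The main obstacle is justifying both the formal functions identification for the bounded-below complex $\Omega^{\ge p}_{X/A}$ of coherent sheaves (rather than a single sheaf) and the Mittag--Leffler condition throughout the long exact sequence; these reduce to standard arguments since all terms are $A$-modules of finite length at each level of the projective system. Applying Hodge--to--de~Rham degeneration in the relative setting $X/A$ requires knowing it for smooth proper families over the DVR $A$, which follows from its validity on the generic fibre together with coherence and flatness of the Hodge sheaves.
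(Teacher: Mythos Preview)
Your proof is correct and follows essentially the same approach as the paper: truncate the Gauss--Manin sequence (\ref{GM_complexes}) in degrees $\ge p$, take $\projlim_r$ of hypercohomology, invoke Hodge--to--de~Rham degeneration for $X/A$ to identify $\bb H^n(X,\Omega_{X/A}^{\ge p})$ with $F^pH^n_\sub{dR}(X/A)$, and run a diagram chase against (\ref{GM_commutative_diagram}). You are in fact more precise than the paper in one small respect: you correctly note that the left term of the truncated sequence involves $\Omega_{Y_r/A_r}^{\ge p-1}$, so that the target of $\nabla_p$ is $\hat\Omega^1_{A/K}\otimes_A F^{p-1}H^n_\sub{dR}(X/A)$ rather than $F^p$; the paper writes $F^p$ there, but since only injectivity of this term into $\hat\Omega^1_{A/K}\otimes_A H^n_\sub{dR}(X/A)$ is used, the discrepancy is harmless.
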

\begin{proof}
Naively truncating (\ref{GM_complexes}) in degrees $\ge p$, and using degeneration of the Hodge-to-de-Rham spectral sequence for $X\to\operatorname{Spec} A$ to identify $\bb H^n(X,\Omega_{X/A}^{\ge p})$ with $F^pH^n_\sub{dR}(X/A)$, we see that we may add
\[\xymatrix@C=1cm@R=0.5cm{
\cdots\ar[r]& \projlim_r\bb H^n(Y_r,\Omega_{Y_r/K}^{\ge p})\ar[r]\ar[d]& F^pH_\sub{dR}^n(X/A)\ar@{^(->}[d]\ar[r]^{\nabla\qquad}& \hat\Omega_{A/K}^1\otimes_A F^pH_\sub{dR}^n(X/A)\ar[r]\ar@{^(->}[d]&\cdots\\
&&&
}\]
to the top of the commutative diagram (\ref{GM_commutative_diagram}). A quick diagram chase now shows that if $x$ is in $H_\sub{dR}^n(Y/K)$, then $\Phi^{-1}(x)$ is in $F^pH_\sub{dR}^n(X/A)$ if and only if $x$ is in the image of $\projlim_r\bb H^n(Y_r,\Omega_{Y_r/K}^{\ge p})$, as required.
\end{proof}
\comment{
\begin{remark}
The flat filtration $F^*_\sub{flat}H^n_\sub{dR}(Y/k)$ may also be described as follows: \[F^p_\sub{flat}H^n_\sub{dR}(Y/k)=\op{Im}\big(\projlim_r\bb H^n(Y_r,\Omega_{Y_r}^{\ge p})\To\bb H^n(Y,\Omega_Y^{\ge p})=F^pH^n_\sub{dR}(Y)\big)\] This is obtained as followed: stupidly truncate the complexes occurring in (\dag) and compare the resulting long exact sequence with the previous diagram:
\[\xymatrix@C=1.2cm@R=5mm{
\cdots\ar[r]& \projlim_r\bb H^n(Y_r,\Omega_{Y_r/k}^{\ge p})\ar[r]\ar[d]& F^pH_\sub{dR}^n(X/A)\ar@{^(->}[d]\ar[r]^{\nabla\qquad}& \hat\Omega_{A/k}^1\otimes_A F^pH_\sub{dR}^n(X/A)\ar[r]\ar@{^(->}[d]&\cdots\\
0\ar[r]&\projlim_rH_\sub{dR}^n(Y_r/k)\ar[r]\ar[dr]^\cong_\Phi& H_\sub{dR}^n(X/A)\ar[d]\ar[r]^{\nabla\qquad}&\hat\Omega^1_{A/k}\otimes_A H_\sub{dR}^n(X/A)\ar[r]& 0\\
&&H_\sub{dR}^n(Y/k)&&
}\]
(Degeneration of the Hodge-to-de-Rham spectral sequence for $X\to\Spec A$ has been used to identify $\bb H^n(X,\Omega_{X/A}^{\ge p})$ with $F^pH^n_\sub{dR}(X/A)$.) A quick diagram chase shows that if $x\in H_\sub{dR}^n(Y/k)$, then $\Phi^{-1}(x)$ is in $F^pH_\sub{dR}^n(X/A)$ if and only if $x$ is in the image of $\projlim_r\bb H^n(Y_r,\Omega_{Y_r/k}^{\ge p})$, as required.
\end{remark}
}

We may now prove the existence of the commutative diagram (\ref{main_commutative_diagram}) promised in the introduction; in fact we work with an arbitrary $K$-group $K_n$, rather than only $K_0$, and we will comment on the case $n>0$ after the proof:

\begin{theorem}\label{proof_of_main_diagram}
Assume that $K$ is algebraic over $\bb Q$. Then, for all $n\ge0$, there is a natural commutative diagram with exact rows
\[\xymatrix@C=8mm{
\projlim_rK_n(Y_r)\ar[r]\ar[d]& K_n(Y)\ar[d]^{ch}\ar[r]& \projlim_rK_{n-1}(Y_r,Y)\ar[d]^\cong\\
\bigoplus_p\projlim_r\bb H^{2p-n}(Y_r,\Omega_{Y_r/K}^{\ge p})\ar[r]_-i& \bigoplus_pF^pH^{2p-n}_\sub{dR}(Y/K)\ar[r]& \bigoplus_p\projlim_r\bb H^{2p-(n-1)}(Y_r,\Omega_{(Y_r,Y)/K}^{\ge p})&
}\] Moreover, the image of the map $i$ is $\bigoplus_pF^p_\sub{flat}H_\sub{dR}^{2p-n}(Y/K)$.
\end{theorem}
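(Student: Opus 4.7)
The plan is to build the diagram by mapping the relative $K$-theory fibre sequence into negative cyclic homology via the Chern character, and then translating into the de Rham picture using the pro HKR theorem (Corollary \ref{corollary_pro_HKR}). The hypothesis that $K$ is algebraic over $\bb Q$ enters because it forces $\Omega^1_{K/\bb Q}=0$, so $\Omega^\bullet_{Z/K}=\Omega^\bullet_{Z/\bb Q}$ for any $K$-scheme $Z$; this lets one apply the pro HKR theorem with base field $k=\bb Q$ while still obtaining the $K$-de Rham complexes that appear in the statement.

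For the bottom row, the surjections $\Omega^q_{Y_r/K}\onto\Omega^q_{Y/K}$ give a short exact sequence $0\to\Omega^{\ge p}_{(Y_r,Y)/K}\to\Omega^{\ge p}_{Y_r/K}\to\Omega^{\ge p}_{Y/K}\to 0$ of truncated complexes. Taking hypercohomology on $Y_r$ yields a long exact sequence, and since the category of pro abelian groups is abelian, $\projlim_r$ preserves exactness; summing over $p$ and using Hodge-to-de-Rham degeneration on the smooth proper $K$-scheme $Y$ (which identifies $\bb H^{2p-n}(Y,\Omega^{\ge p}_{Y/K})$ with $F^pH^{2p-n}_\sub{dR}(Y/K)$) produces the bottom row. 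The image of $i$ is then $\bigoplus_pF^p_\sub{flat}H^{2p-n}_\sub{dR}(Y/K)$ directly from Lemma \ref{lemma_flat_filtration}.

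For the top row, the relative $K$-theory fibre sequence $K(Y_r,Y)\to K(Y_r)\to K(Y)$ is natural in $r$, so applying $\projlim_r$ in the pro category gives the top row. The Chern character $ch\colon K_*\to HN_*^{\bb Q}$ provides natural vertical maps. For the middle column, apply the classical HKR isomorphism $HN_n^{\bb Q}(Y)\cong\bigoplus_p\bb H^{2p-n}(Y,\Omega^{\ge p}_{Y/K})$ (valid since $Y/K$ is smooth) and project onto the components. For the left column, use Corollary \ref{corollary_pro_HKR} directly; for the right column, take the pro-system fibre of Corollary \ref{corollary_pro_HKR} applied to $HN(Y_r)\to HN(Y)$, which identifies $\{HN_{n-1}^{\bb Q}(Y_r,Y)\}_r$ with $\bigoplus_p\{\bb H^{2p-(n-1)}(Y_r,\Omega^{\ge p}_{(Y_r,Y)/K})\}_r$.

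To upgrade the right-hand vertical Chern character to an isomorphism, invoke the pro form of Goodwillie's theorem: in characteristic zero, for any nilpotent ideal the relative Chern character $K_*(R,I)\to HN_*^{\bb Q}(R,I)$ is an isomorphism, and the same holds for the pro system $\{(R,I^r)\}_r$. Applied to the thickening $Y\into Y_r$, this yields the asserted pro-isomorphism. Commutativity of both squares and exactness of the rows then follow from naturality of the Chern character in fibre sequences and compatibility of the pro HKR identifications with connecting maps. The main obstacle is really bookkeeping: verifying that the pro HKR identifications really do intertwine with the Chern character and with the boundary maps in the relative theories, so that the two rows assemble into a commutative diagram; the genuine inputs are pro HKR and pro-Goodwillie, after which the rest is diagram chasing.
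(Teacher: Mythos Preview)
Your overall architecture matches the paper's exactly: Goodwillie's Chern character to $HN^{\bb Q}$, Goodwillie's theorem for the right-hand isomorphism, classical and pro HKR (Corollary~\ref{corollary_pro_HKR}) to translate into truncated de Rham complexes, Hodge--de Rham degeneration for the middle column, and Lemma~\ref{lemma_flat_filtration} for the image of $i$. The one genuine gap is your passage from pro exactness to exactness of actual inverse limits.

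You write that ``since the category of pro abelian groups is abelian, $\projlim_r$ preserves exactness.'' This is not correct: Pro-Ab is abelian, but the functor $\projlim_r$ to Ab is only left exact, with obstruction $\projlim_r^1$. The diagram in the theorem lives in Ab, not Pro-Ab, so exactness of both rows after taking limits needs a separate argument. For the bottom row the paper observes that each $\bb H^{2p-n}(Y_r,\Omega_{Y_r/K}^{\ge p})$ is a finite-dimensional $K$-vector space (here $K$ algebraic over $\bb Q$ is used again, to make $\Omega_{Y_r/\bb Q}^\bullet=\Omega_{Y_r/K}^\bullet$ a complex of coherent sheaves on the proper $K$-scheme $Y_r$), so the inverse systems are Mittag--Leffler and $\projlim_r$ is exact. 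The top row is more delicate, since the $K$-groups $K_n(Y_r)$ have no reason to be finite-dimensional. The paper's trick is to transport finite-dimensionality across the Goodwillie isomorphism: $\{K_{n-1}(Y_r,Y)\}_r$ is thereby a system of finite-dimensional $K$-spaces, hence Mittag--Leffler, and so is its image $\{\ker(K_n(Y_r)\to K_n(Y))\}_r$ in $K_n(Y_r)$; from this one deduces exactness of $\projlim_rK_n(Y_r)\to K_n(Y)\to\projlim_rK_{n-1}(Y_r,Y)$. Thus the hypothesis $K/\bb Q$ algebraic does double duty---identifying $\Omega_{/\bb Q}$ with $\Omega_{/K}$, as you note, \emph{and} supplying the finite-dimensionality that makes the limits behave---and your proposal omits the second use.
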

\begin{proof}\rm
Goodwillie's Chern character from $K$-theory to negative cyclic homology gives a commutative diagram of pro abelian groups with exact rows:
\begin{equation}
\xymatrix{
\{K_n(Y_r)\}_r\ar[r]\ar[d]^{ch}& K_n(Y)\ar[d]^{ch}\ar[r]& \{K_{n-1}(Y_r,Y)\}_r\ar[d]^\cong\\
\{HN_n^\bb Q(Y_r)\}_r\ar[r]_i& HN_n^\bb Q(Y)\ar[r]& \{HN_{n-1}^\bb Q(Y_r,Y)\}_r
}\label{chern_character}
\end{equation}
where the right vertical arrow is an isomorphism by Goodwillie's theorem \cite{Goodwillie1986}.

As explained in section \ref{section_HKR}, with $k=\bb Q$, the classical from of the HKR theorem implies that $HN_n^\bb Q(Y)\cong \bigoplus_p\bb H^{2p-n}(Y,\Omega_{Y/\bb Q}^{\ge p})$, which equals $\bigoplus_pF^pH^{2p-n}_\sub{dR}(Y/K)$ by the fact that $K$ is algebraic over $\bb Q$ and by degeneration of the Hodge-to-de-Rham spectral sequence for $Y$. Secondly, corollary \ref{corollary_pro_HKR}, again with $k=\bb Q$, states that $\{HN_n^\bb Q(Y_r)\}_r\cong \bigoplus_p\{\bb H^{2p-n}(Y_r,\Omega_{Y_r/\bb Q}^{\ge p})\}_r$. It follows that \[\textstyle\{HN_{n-1}^\bb Q(Y_r,Y)\}_r\cong\bigoplus_p\{\bb H^{2p-(n-1)}(Y_r,\Omega_{(Y_r,Y)/\bb Q}^{\ge p})\}_r.\] Since $K$ is algebraic over $\bb Q$, it does not matter whether any of the aforementioned sheaves of K\"ahler differentials are taken over $K$ or over $\bb Q$.

To complete the proof it now remains only to replace the pro abelian groups occurring in (\ref{chern_character}) by their limits, which is done as follows. Since $K$ is algebraic over $\bb Q$ and $Y$ is proper over $K$, all the aforementioned hypercohomology groups are finite-dimensional $K$-spaces; thus the bottom row of (\ref{chern_character}) remains exact after taking $\projlim_r$. As for the top row of (\ref{chern_character}), the right is an inverse system of finite dimensional $K$-spaces (thanks to the isomorphism), whence both it and the inverse system $\{\op{Im}(K_{n+1}(Y_r,Y)\to K_n(Y_r))\}_r=\{\ker(K_n(Y_r)\to K_n(Y))\}_r$ are Mittag-Leffler. It easily follows that the sequence $\projlim_r K_n(Y_r)\to K_n(Y)\to\projlim_rK_{n-1}(Y_r,Y)$ is exact, as required.

Finally, the description of the image of $i$ is exactly the previous lemma.
\end{proof}

Setting $n=0$, this completes the proof of commutative diagram (\ref{main_commutative_diagram}), hence also of Theorem \ref{main_theorem}.

We finish the paper with two remarks. Firstly, Goodwillie's Chern character $\op{ch}:K_n(Y)\to HN^\bb Q_n(Y)=\bigoplus_pF^pH^{2p-n}_\sub{dR}(Y/K)$ appearing in Theorems \ref{main_theorem} and \ref{proof_of_main_diagram} coincides with the usual Chern character of de Rham cohomology by \cite[Thm.~1]{Weibel1993}. Secondly, suppose that $n>0$. Then Esnault and Kerz have pointed out to me that the Chern characters $K_n(Y)\to F^pH_\sub{dR}^{2p-n}(Y/K)$ are zero for all $p$, by a weight argument over $\bb C$. So, assuming $K$ is algebraic over $\bb Q$, it follows at once from Theorem \ref{proof_of_main_diagram}, or essentially just from diagram (\ref{chern_character}), that the map $\projlim_rK_n(Y_r)\to K_n(Y)$ is surjective. We presume this is a known result but cannot provide a reference.

\section*{Acknowledgements}
This note originated as a letter, entitled ``Deformational Hodge conjecture v.s.~Pro HKR'', to H.~Esnault after a hospitable visit to the Freie Universit\"at Berlin in April 2013. During this visit she explained to me the precise details of the main result of \cite{BlochEsnaultKerz2012}, after which the similarity with the pro HKR theorems on which I was working at the time became clear to me. I would like to thank her, S.~Bloch, and M.~Kerz for their interest in this short proof of mine;  their own work \cite{BlochEsnaultKerz2013} on such deformational problems treats more general base fields $K$ by combining Theorem \ref{Pro_HKR} with an assumed Chow--K\"unneth decomposition of $Y$.

\small
\bibliographystyle{acm}
\bibliography{../Bibliography}
\comment{
\small

}
\normalsize
\noindent Matthew Morrow\hfill {\tt morrow@math.uni-bonn.de}\\
Mathematisches Institut\hfill \url{http://www.math.uni-bonn.de/people/morrow/}\\\
Universit\"at Bonn\\
Endenicher Allee 60\\
53115 Bonn, Germany

\end{document}